\documentclass[reqno,10pt]{amsart}
\pdfoutput=1

\usepackage[utf8]{inputenc}
\usepackage[english]{babel}
\hyphenation{Sch-war-z-schild}
\hyphenation{non-pa-ra-bo-lic}
\hyphenation{asymp-tot-i-cal-ly}
\hyphenation{Asymp-tot-i-cal-ly}
\usepackage{amsfonts}
\usepackage{amssymb}
\usepackage{bbm}
\usepackage{tikz}
\usepackage{svg}
\usepackage{scalerel}
\usepackage[right=2.5cm,bottom=2.5cm,footskip=30pt,left=2.5cm]{geometry}
\usepackage[autostyle]{csquotes}
\MakeOuterQuote{"}
\usepackage{color}
\usepackage{mathrsfs}
\usepackage{mathtools,leftidx}
\usepackage{enumitem}
\usepackage{amsmath}
\usepackage{mathrsfs}
\usepackage{accents}
\usepackage{scalerel}
\usepackage{aligned-overset}
\usepackage{tcolorbox}
\setlist[enumerate,2]{label={(\theenumi.\theenumii)},ref={(\theenumi.\theenumii)}}
\setlist[enumerate,1]{label={(\theenumi)},ref={(\theenumi)}}

\usetikzlibrary{svg.path,arrows,calc}
\tikzset{>=latex'}

\usepackage[toc]{appendix}
\usepackage{etoolbox}
\cslet{blx@noerroretextools}\empty
\usepackage[maxbibnames=9,giveninits=true,style=alphabetic,sorting=nyt,backend=biber]{biblatex}
\DeclareFieldFormat{titlecase:title}{\MakeSentenceCase*{#1}}
\renewbibmacro*{url+urldate}{%
\iffieldundef{urlyear}
  {}
  {}
\usebibmacro{url}}
\addbibresource{p-Ricci-pinched-blop.bib}
\setcounter{biburlnumpenalty}{9000}
\setcounter{biburllcpenalty}{9000}
\setcounter{biburlucpenalty}{9000}
\def\restrict#1{\raise-.5ex\hbox{\ensuremath|}_{#1}}

\usepackage[colorlinks, citecolor=citegreen, linkcolor=refred,urlcolor=blue, unicode,psdextra,hypertexnames=false]{hyperref}
\usepackage{cleveref}
\crefname{enumi}{}{}
\crefname{enumii}{}{}
\expandafter\def\csname ver@etex.sty\endcsname{3000/12/31}

\makeatletter
 \def\author@andify{
 \nxandlist {\unskip{\kern.3cm} \penalty-2}
 {\unskip {\kern.3cm} \penalty-2}
 {\unskip {\kern.3cm} \penalty-2}}
\makeatother

\newcommand{\orcid}[1]{\unskip {} \raisebox{.4ex}{\href{https://orcid.org/#1}{\resizebox{.25cm}{.25cm}{\includegraphics{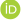}}}}}

\usepackage{accents}
\usepackage{autonum}
\usepackage{gasymbols}
\usepackage{nicefrac}

\definecolor{citegreen}{rgb}{0,0.3,0}
\definecolor{refred}{rgb}{0.5,0,0}

\def\mathring#1{\accentset{\circ}{#1}}
\usepackage[equation=section]{theorems}

\let\oldemail\email
\let\email\relax
\def\email#1{\oldemail{\href{mailto:#1}{\textcolor{black}{#1}}}}

\title[Nonlinear potential theory and Ricci-pinched $3$-manifolds]{Nonlinear potential theory and Ricci-pinched $3$-manifolds}

\author[L.~Benatti]{Luca Benatti\orcid{0000-0002-4685-7443}}
\address{L.~Benatti, Universit\`a di Pisa,
Largo Bruno Pontecorvo 5, 56127 Pisa, Italy}
\email{luca.benatti@dm.unipi.it}

\author[A.~León Quirós]{Ariadna León Quirós}
\address{A.~León Quirós, Eberhard Karls Universit\"{a}t Tübingen, Fachbereich Mathematik, Auf der Morgenstelle 10, 72076 T\"{u}bingen, Germany}
\email{quiros@math.uni-tuebingen.de}

\author[F.~Oronzio]{Francesca Oronzio\orcid{0009-0006-6597-0077}}
\address{F.~Oronzio, Scuola Superiore Meridionale, Via Mezzocannone 4, 80134 Napoli, Italy}
\email{f.oronzio@ssmeridionale.it}

\author[A.~Pluda]{Alessandra Pluda\orcid{0000-0003-4714-4119}}
\address{A.~Pluda, Universit\`a di Pisa,
Largo Bruno Pontecorvo 5, 56127 Pisa, Italy}
\email{alessandra.pluda@unipi.it}

\renewcommand{\ncapa}{\mathfrak{c}}
\makeatletter

\makeatother

\newcommand{\MF}{\mathscr{F}}
\newcommand{\IF}{\mathscr{G}}

\newcommand{\sml}[1]{\scaleobj{.9}{(#1)}}

\begin{document}

 \begin{abstract}
In this paper, we focus on {\em Hamilton's pinching conjecture} formulated in \cite{Hamilton1982ThreemanifoldsWP}. Let $(M, g)$ be a complete, connected, noncompact Riemannian $3$-manifold satisfying the {\em Ricci-pinching condition}. Then, it is flat. Here, we give an alternative proof, based on nonlinear potential theory, under the extra hypothesis of superquadratic volume growth.
\end{abstract}  
\maketitle

\noindent MSC (2020):  53C21, 31C12

\smallskip

\noindent \underline{\smash{Keywords}}: Ricci-pinched Riemannian $3$-manifold, monotonicity formulas, nonlinear potential theory.

\section{Introduction}
A Riemannian manifold $(M,g)$ is said to be {\em Ricci-pinched} if $\Ric\geq 0$ and there exists a constant $\varepsilon>0$ such that $\Ric\geq\varepsilon\mathrm{R}g$, where $\Ric$ and $\mathrm{R}$ are the Ricci and scalar curvature of $(M,g)$, respectively. Ricci-pinched manifolds are the focus of {\em Hamilton's pinching conjecture}, which was stated by Richard Hamilton in \cite{Hamilton1982ThreemanifoldsWP} and is now established as a theorem:
\begin{theorem}\label{thmpinchingtheorem}
Let $(M, g)$ be a complete, connected Riemannian $3$-manifold. Suppose that $M$ is Ricci-pinched. Then, $M$ is flat or compact.
\end{theorem}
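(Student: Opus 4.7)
The plan is to reduce the theorem to the assertion that a complete, connected, \emph{noncompact} Ricci-pinched $3$-manifold is flat, the compact case being already in the conclusion. Fix $o \in M$ and set $V(r) = \mathrm{Vol}(B_r(o))$. Since $\Ric \geq 0$, the Bishop--Gromov theorem forces $V(r)/r^3$ to be non-increasing, so volume growth is at most cubic. The argument then splits according to whether it is at most quadratic or strictly superquadratic.

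In the at-most-quadratic regime I would appeal to the existing literature: under Ricci-pinching plus $V(r) = O(r^2)$, soul/splitting arguments in the spirit of Liu, Lott and Zhu produce a line in the universal cover, so by Cheeger--Gromoll the cover splits isometrically; iterating reduces the dimension and eventually forces flatness. The genuinely new case, announced in the abstract, is the strictly superquadratic regime. There the plan is to use nonlinear potential theory: for $p$ slightly less than $3$, superquadratic volume growth yields $p$-nonparabolicity, so there exists a proper $p$-harmonic Green's function $u_p \colon M \setminus \{o\} \to (0,\infty)$ with pole at $o$. Setting $w_p = u_p^{-(p-3)/(p-1)}$, which on flat $\mathbb{R}^3$ reduces to the Euclidean distance from $o$, one considers a monotone quantity of the schematic shape
\begin{equation*}
F_p(t) \;=\; t^{-\beta(p)} \int_{\{w_p = t\}} |\nabla w_p|^{\gamma(p)} \, d\sigma ,
\end{equation*}
whose monotonicity is established by differentiating along $\nabla w_p$, applying the Bochner identity on level sets, and invoking a refined Kato inequality. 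The Ricci-pinching hypothesis $\Ric \geq \varepsilon \mathrm{R} g$ is exactly what is needed to absorb the sign-indefinite scalar-curvature term arising from Bochner into the nonnegative traceless Ricci part, keeping the bulk contribution to $F_p'(t)$ nonnegative. Computing the limits of $F_p(t)$ as $t \to 0^+$ (a local invariant at $o$) and as $t \to \infty$ (proportional to the asymptotic volume ratio) and comparing with the flat model, one forces $F_p$ to be constant; the rigidity case then makes $\nabla^2 w_p$ isotropic along level sets and, combined once more with $\Ric \geq \varepsilon \mathrm{R} g$, yields $\Ric \equiv 0$. In dimension three this means $M$ is flat.

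The main obstacle is the asymptotic analysis of $u_p$ on a manifold having only a Ricci lower bound: sharp control on $|\nabla u_p|$, integrability along level sets, and the identification of $\lim_{t\to\infty} F_p(t)$ in terms of the asymptotic volume ratio rely on $p$-capacity estimates that are classical in Euclidean space but delicate in the present setting. A secondary difficulty is the rigidity step: extracting flatness at \emph{every} point of $M$ from the equality case typically requires letting $p$ vary in an interval, so that the Hessian rigidity holds on a family of level sets sweeping out $M$, together with a careful passage to the limit $p \to 3^-$ where the conformal invariance of the $3$-Laplacian introduces additional degeneracies.
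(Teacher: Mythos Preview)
The paper does not prove \cref{thmpinchingtheorem}. It is stated there as an already established result, obtained through the sequence of works by Chen--Zhu, Lott, Deruelle--Schulze--Simon, and finally Lee--Topping, all of which rely on Ricci flow. What the paper actually proves is the much more restricted \cref{main}: a noncompact Ricci-pinched $3$-manifold with \emph{superquadratic} volume growth (in the sense of the two-sided bound \eqref{superquadratic} with $\alpha\in(1,2]$) is flat. This is explicitly described as only ``one of the building blocks'' of \cref{thmpinchingtheorem}, not a full proof of it.

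Your plan therefore has two genuine gaps. First, the dichotomy you set up is not exhaustive: a noncompact manifold with $\Ric\geq 0$ need not have volume $\asymp r^{1+\alpha}$ for any fixed $\alpha$, so ``at most quadratic'' versus ``strictly superquadratic'' does not cover all cases, and even the paper's potential-theoretic method only addresses the superquadratic band. Second, the ``at most quadratic'' case cannot be dispatched by soul/splitting arguments as you suggest: subquadratic volume growth does not by itself produce a line, and the full theorem in that regime is precisely what required the Ricci-flow machinery of Lee--Topping; this is the hard part, not a citation. Regarding the superquadratic part, your sketch also diverges substantially from the paper's argument: the paper works with $p\in(1,2)$ close to $1$ (not $p$ near $3$), argues by contradiction from a small sphere with Willmore energy below $16\pi$, and the key ingredient is a weak Gauss--Bonnet theorem on the level sets combined with the Gauss equation, yielding exponential decay of $\MF_p$ and hence a volume-growth contradiction. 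The pinching condition enters through \eqref{eq:genere_zero}, not through a Bochner/Kato identity, and there is no rigidity step of the type you describe.
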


The theorem can be interpreted as an extension of Bonnet--Myers' theorem in three dimensions: if $(M,g)$ is a complete and connected $n$-dimensional Riemannian manifold such that $\Ric\geq (n-1)k^2g$, for some constant $k>0$, then $M$ is compact and $\mathrm{diam}(M,g)\leq\pi/k$. 

There is also an extrinsic counterpart of the theorem for hypersurfaces of the Euclidean space, that was proved by Hamilton himself in~\cite{hamilton_1994}.

\begin{theorem}
Let $M^n$ be a smooth, strictly convex, and complete hypersurface bounding a region in $\R^{n+1}$. Suppose that its second fundamental form is $\varepsilon$-pinched, in the sense that
\begin{equation}
\h_{ij}\geq\varepsilon\H g_{ij},
\end{equation}
where $ g_{ij}$ is the induced Riemannian metric, $\h$ the second fundamental form and its trace $\H$ is the mean curvature, for some $\varepsilon>0$. Then $M^n$ is compact. 
\end{theorem}

Regarding the intrinsic version of the Ricci-pinching theorem, the first step towards the proof was taken by Chen and Zhu \cite{chenzhu_completeriemannianmanifolds_2000}. They showed that a complete noncompact Riemannian $3$-manifold, which has bounded and nonnegative sectional curvature and is Ricci-pinched, is flat. Then, Lott \cite{lott_3manifolds_2023} enhanced their result, requiring less stringent conditions on the sectional curvature. Deruelle--Schulze--Simon \cite{deruelle_initialstabilityRicciflow_2022} proved that the conjecture holds if the curvature is merely bounded. Finally, Lee and Topping \cite{leetopping_3manifoldnonnegpinchedRic_2022} removed the assumption on the curvature. 

\medskip

All the above mentioned results rely on the Ricci flow. A proof based on different geometric flows is conceivable; however, current attempts in the literature require an extra assumption on the volume growth at infinity \cite{huisken_inversemeancurvatureflow_2023, benatti_notericcipinchedthreemanifolds_2024}. This is also the case in the present paper. We say that a complete, connected, noncompact Riemannian $3$-manifold $(M, g)$  with nonnegative Ricci curvature has \emph{superlinear} volume growth if there exist $q\in M$, $\kst_{\mathrm{vol}}> 0$, $\tilde{r}>0$ and $\alpha>0$ such that for every radius $r>\tilde{r}$ it holds
\begin{equation}\label{superquadratic}
\kst_{\mathrm{vol}}^{-1}\, r^{1+\alpha}\leq \abs{B_r(q)}\leq \kst_{\mathrm{vol}}\, r^{1+\alpha}.
\end{equation}
By Bishop--Gromov theorem the parameter $\alpha$ is in $(0, 2]$, and it is independent of the point $q\in M$, while the constant $\kst_{\mathrm{vol}}$ could depend on it. We say that $M$ has {\em superquadratic} volume growth if $\alpha\in (1, 2]$ and {\em Euclidean volume growth} if $\alpha=2$. In the latter case, $(M,g)$ has strictly positive {\em asymptotic volume ratio} 
\begin{equation}
\AVR(g)\coloneqq\frac{3}{4\pi}\lim_{r\to +\infty}\frac{\abs{B_r(p)}}{r^3},\qquad\text{with}\;p\in M.
\end{equation} 

The aim of this short note is to provide a new proof of the following result.
\begin{theorem}\label{main}
Let $(M,g)$ be a complete, connected, noncompact, Ricci–pinched Riemannian $3$–manifold. Suppose that $(M,g)$ has superquadratic volume growth. Then, $(M,g)$ is  isometric to $\R^3$.
\end{theorem}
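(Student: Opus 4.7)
The plan is to rely on nonlinear potential theory, constructing a $p$-harmonic capacitary potential on $M$ and extracting geometric consequences from Bochner-type monotonicity formulas along its level sets. First, I would fix $p\in(1,1+\alpha)$; since $\alpha>1$, the admissible range contains $p=2$. Under \cref{superquadratic}, the Varopoulos nonparabolicity criterion ensures that $(M,g)$ is $p$-nonparabolic for each such $p$. For a smoothly bounded domain $\Omega\Subset M$, let $u=u_p\colon M\setminus\overline{\Omega}\to(0,1]$ be the $p$-capacitary potential solving $\Delta_p u=0$ with $u=1$ on $\partial\Omega$ and $u\to 0$ at infinity; standard regularity for the $p$-Laplacian yields $u\in C^{1,\beta}_{\mathrm{loc}}$, smooth off its critical set.

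Next, I would introduce on the regular level sets $\{u=t\}$ a monotone functional of the form
\[
F_p(t)\;=\;t^{\gamma(p)}\int_{\{u=t\}}|\nabla u|^{p-1}\,\mathrm{d}\sigma,
\]
with exponent $\gamma(p)$ tuned so that $F_p$ is constant on Euclidean $3$-space. A first-variation computation combined with the Bochner identity applied to $u$ reduces the monotonicity of $F_p$ to the nonnegativity of an integrand controlled by $\Ric\geq 0$. Substituting the pinching bound $\Ric\geq\varepsilon\mathrm{R}g$ in place of $\Ric\geq 0$ in the same Bochner computation produces an extra term, giving the strengthened inequality
\[
F_p(1^-)-F_p(0^+)\;\geq\;\varepsilon\,c_p\int_{M\setminus\overline{\Omega}}\mathrm{R}\,|\nabla u|^p\,\mathrm{d}V_g,
\]
where $c_p>0$ depends only on $p$. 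The boundary value $F_p(1^-)$ is controlled by the geometry of $\partial\Omega$, and the finiteness of $F_p(0^+)$ is a consequence of the volume bound \cref{superquadratic} together with capacitary asymptotics of $u_p$ at infinity.

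Taking $\Omega=B_r(q)$ and letting $r\to\infty$, combined with uniform-in-$r$ lower bounds on $|\nabla u_p|$ over fixed annular regions, forces $\mathrm{R}\equiv 0$ on $M$. Since $\Ric\geq 0$ and $\operatorname{tr}\Ric=\mathrm{R}=0$ together imply $\Ric\equiv 0$, and since $\Ric\equiv 0$ in dimension three is equivalent to vanishing of the full Riemann tensor, it follows that $(M,g)$ is flat.

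The main obstacle I foresee is the asymptotic analysis in the genuinely subcritical regime $\alpha\in(1,2)$. When $\alpha=2$, positivity of $\AVR(g)$ enables comparison with a Euclidean tangent cone and yields sharp asymptotics for $u_p$ and a closed-form expression for $F_p(0^+)$; for $\alpha\in(1,2)$ no such model is available, and both the finiteness of $F_p(0^+)$ and the gradient lower bounds must be derived directly from the two-sided polynomial volume control \cref{superquadratic}, via capacitary estimates adapted to the polynomial regime. A secondary difficulty is the rigorous treatment of the critical set $\{\nabla u_p=0\}$, standard in nonlinear potential theory but delicate to couple with the monotonicity calculus.
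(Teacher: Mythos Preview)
Your proposal has several genuine gaps.

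First, the functional you write down carries no information. For a $p$-harmonic $u$, the divergence theorem applied to $\abs{\nabla u}^{p-2}\nabla u$ gives that $\int_{\{u=t\}}\abs{\nabla u}^{p-1}\,\dif\sigma$ is \emph{constant} in $t$; it is the $p$-capacity of $\partial\Omega$. Hence $F_p(t)=t^{\gamma(p)}\cdot\text{const}$, and no curvature enters its variation. The paper instead works with the Willmore-type quantity
\[
\MF_p(t)=\int_{\partial\Omega^{\sml{p}}_t}\frac{\H^2}{4}-\Big(\frac{\H}{2}-\frac{\abs{\nabla w_p}}{3-p}\Big)^2\dif\Hff^{2},
\]
whose derivative involves $\Ric(\nu,\nu)$, $\abs{\mathring{\h}}^2$, and the defect $(\H-\tfrac{2}{3-p}\abs{\nabla w_p})^2$.

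Second, and more fundamentally, plugging $\Ric\geq\varepsilon\mathrm{R}g$ directly into the Bochner Ricci term is not how the pinching is exploited. Since $\mathrm{R}\geq 0$, the bound $\Ric(\nu,\nu)\geq\varepsilon\mathrm{R}\abs{\nu}^2$ is \emph{weaker} than $\Ric(\nu,\nu)\geq 0$ as an input to Bochner, so your ``extra term'' does not improve the basic monotonicity. In the paper the pinching enters through the Gauss equation combined with a (weak) Gauss--Bonnet theorem for the level sets: this produces the topological threshold $16\pi$ and the dichotomy of \cref{eq:genere_zero,eq:genere_maggiore_uno}, which in turn force the differential inequality $\MF_p'\leq\max\{-\tfrac{2}{3-p}\MF_p,\,-\tfrac{\varepsilon}{3-p}(8\pi-2\MF_p)\}$ and the exponential decay $\MF_p(t)\leq C\ee^{-\frac{2}{3-p}t}$. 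Without a Gauss--Bonnet input there is no mechanism to generate the $16\pi$, and it is precisely this step that requires the weak Gauss--Bonnet of \cite{benatti_finepropertiesnonlinearpotentials_2024}, since for $p<2$ the level sets lack the regularity for the classical theorem.

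Third, the step ``take $\Omega=B_r(q)$ and let $r\to\infty$'' does not make sense as written: the potential $u_p$ lives on $M\smallsetminus\overline{B_r(q)}$, so as $r\to\infty$ any fixed compact set is eventually swallowed by $\Omega$ and $u_p$ is not even defined there; there are no ``uniform-in-$r$ lower bounds on $\abs{\nabla u_p}$ over fixed annular regions'' to speak of. The paper instead fixes a \emph{small} geodesic ball $\Omega=\overline{B}_r(o)$ with $\int_{\partial B_r(o)}\H^2<16\pi$ (so $\MF_p(0)<4\pi$) and derives a contradiction between the exponential decay of $\IF_p\leq\MF_p$ and the growth of the $p$-capacity $\ncapa_p(\partial\Omega_t)=\ee^t\ncapa_p(\partial\Omega)$ via H\"older, the coarea formula, and the decay bound \cref{upper_bound}.

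Finally, your choice of $p$ goes in the wrong direction. The paper's contradiction requires $\alpha>4/(5-p)$, so one needs $p$ close to $1$ to cover all $\alpha>1$; the case $p=2$ only yields $\alpha>4/3$ (this is exactly the earlier result \cite{benatti_notericcipinchedthreemanifolds_2024}). The extra difficulty you anticipate in the subcritical regime is real, but it is resolved by sending $p\to 1^+$ rather than by sharper asymptotics at a fixed $p$.
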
 

\cref{main} is contained in Deruelle--Schulze--Simon~\cite[Theorem 1.3]{deruelle_initialstabilityRicciflow_2022} and proved by Huisken--Körber~\cite[Theorem 12]{huisken_inversemeancurvatureflow_2023} employing the inverse mean curvature flow.  The experts will quickly realize that our proof resembles the one by Huisken--Körber, that is intimately based on the monotonicity of the Willmore functional along the inverse mean curvature flow in Ricci-pinched manifolds. Here, we replace the inverse mean curvature flow with $p$-harmonic potentials with $p\in (1,2)$ and the Willmore functional with a suitable proxy defined in \cref{monotone_F}. Crucial in \cite{huisken_inversemeancurvatureflow_2023} is the use of both the Gauss equation and Gauss--Bonnet theorem. The main technical difficulty of our note is the lack of the regularity required to define the Euler characteristic and of the topological properties of the level sets and thus of Gauss--Bonnet theorem. We overcome this issue thanks to a weak version of the theorem recently shown in \cite{benatti_finepropertiesnonlinearpotentials_2024}.

It should be noted that \cref{main} is a generalization of \cite[Theorem 1.5]{benatti_notericcipinchedthreemanifolds_2024}, which instead requires $\alpha>4/3$ in \cref{superquadratic}. The extra assumption on $\alpha$ depends on the technique employed. The proof in \cite{benatti_notericcipinchedthreemanifolds_2024} uses the harmonic potential, which corresponds to the case $p=2$. Here we have the freedom to vary the values of $p$. For each $p$ we obtain a different lower bound on $\alpha$ which approaches $1$ in the limit as $p\to 1^+$. On the other hand, \cite{benatti_notericcipinchedthreemanifolds_2024} has the merit of exploiting the higher regularity of harmonic functions allowing for less technical arguments. This simplification somehow balances the stronger assumption.

We conclude our note by considering the case for manifolds with boundary in \cref{thm:boundary}.

\subsection{Acknowledgments}  
L.B. and A.P. are partially supported by the BIHO Project ``NEWS - NEtWorks and Surfaces evolving by curvature'' and by the MUR Excellence Department Project awarded to the Department of Mathematics of University of Pisa. L.B. is partially supported by the PRIN Project 2022PJ9EFL ``Geometric Measure Theory: Structure of Singular Measures, Regularity Theory and Applications in the Calculus of Variations''.  A.P. is partially supported by the PRIN Project 2022R537CS ``$\rm{NO}^3$ - Nodal Optimization, NOnlinear elliptic equations, NOnlocal geometric problems, with a focus on regularity'' and PRA2022 ``GEODOM''. L.B., F.O and A.P. are members of INdAM - GNAMPA.

A.L.Q. is thankful to her supervisor Gerhard Huisken for his guidance and to Albachiara Cogo for the fruitful conversations on the topic of this paper.

The authors are grateful to the anonymous referees for their insightful comments and constructive feedback, which significantly improved the paper.

\section{Preliminaries}
\begin{center}
\begin{tcolorbox}[frame empty, colback=gray!20, halign=center, width=.8\textwidth, sharp corners,boxsep=1mm, before skip=.2cm,after skip=.1cm]
    \textit{We assume throughout the paper that $(M,g)$ is a smooth, complete, connected,}\\ \textit{orientable, and noncompact Riemannian $3$-manifold.}
\end{tcolorbox}
\end{center}

\subsection{Basic results} Let $(M, g)$ be 
with nonnegative Ricci and superquadratic volume growth
and take $p\in (1,2)$. For a given closed and bounded set $\Omega\subset M$ with smooth boundary, we define the function $w_p$ as the solution to
\begin{equation}\label{eq:moser_p_potential}
\begin{cases}
\Delta_p w_p& =&\abs{\nabla w_p }^p &\text{on $ M\smallsetminus\Omega $}\\
w_p&=&0&\text{on $\partial\Omega$}\\
w_p&\to&+\infty &\text{as $ d(x,o)\to+\infty$}
\end{cases}
\end{equation}
where $\Delta_p f=\div\left( \abs{\nabla f}^{p-2} \nabla f \right)$ denotes the $p$-Laplacian operator of $(M,g)$. 

Existence, uniqueness and regularity of such a solution come from the classical theory for the slightly different problem
\begin{equation}\label{eq:p_harmonic_potential}
\begin{cases}
\Delta_p u_p& =& 0 &\text{on $ M\smallsetminus\Omega $}\\
u_p&=&1&\text{on $\partial\Omega$}\\
u_p&\to&0 &\text{as $ d(x,o)\to+\infty$}.
\end{cases}
\end{equation}
Specifically, the function $w_p=-(p-1)\log u_p$ is a weak solution of the problem \cref{eq:moser_p_potential} provided $u_p$ solves \cref{eq:p_harmonic_potential}. Hence, the study of the properties of $w_p$ reduces to that of $u_p$.

 Problem \cref{eq:p_harmonic_potential} has a unique weak solution, which takes values in $(0,1]$. The function $u_p\in W^{1,p}_{\mathrm{loc}}(M\smallsetminus \Omega)$ is of class $\mathscr{C}^{1,\beta}$ in any precompact set $K$, for some $\beta>0$ depending on $K$. It is smooth on the open set $\left\lbrace\abs{\nabla u_p} \neq 0\right\rbrace$ and smoothly attains the boundary value on $\partial \Omega$ (see~\cite{holopainen_volumegrowthgreenfunctions_1999} and~\cite[Proposition 2.3.5 and Proposition 2.3.6]{benatti_monotonicityformulasnonlinearpotential_2022}). Moreover,  $\abs{\nabla u_p}^{p-1}\in W^{1,2}_{\loc}$ (see \cite{lou_singularsetslocalsolutions_2008}). 

On manifolds with nonnegative Ricci curvature, Laplacian comparison theorem yields a \textit{lower bound} for $u_p$, namely
\begin{equation}\label{lower_bound}
    u_p(x) \geq \kst\, \dist(x,o)^{-\frac{3-p}{p-1}}.
\end{equation}
for some positive constant $\kst= \kst(M, \Omega, p)>0$. Under the additional assumption of superquadratic growth \cref{superquadratic}, we also have an \textit{upper bound} for $u_p$ that reads as
\begin{equation}\label{upper_bound}
u_p(x) \leq \kst\,\dist(x,o)^{-\frac{\alpha+1-p}{p-1}},
\end{equation}
for some constant $\kst= \kst(M, \Omega, p)>0$. For a detailed proof of these two results, we refer the reader to \cite[Theorem 2.11]{benatti_minkowskiinequalitycompleteriemannian_2024} and \cite[Proposition 5.10]{holopainen_volumegrowthgreenfunctions_1999} (see also \cite[Proposition 2.3.2]{benatti_monotonicityformulasnonlinearpotential_2022}).

Let $\Omega^{\sml{p}}_t=\left\lbrace w_p\leq t\right\rbrace\cup \Omega$, then for almost every $t\in [0,+\infty)$, the set $\partial\Omega_t^{\sml{p}}\cap \left\lbrace\abs{\nabla w_p}=0\right\rbrace$ is $\Hff^{2}$-negligible. Almost every $\partial \Omega^{\sml{p}}_t$ admits a suitable geometric notion of (weak) mean curvature $\H$ and second fundamental form $\h$ (see \cite{benatti_finepropertiesnonlinearpotentials_2024}). 
\medskip

We recall the definition of the normalized $p$-capacity of $D$  closed bounded subset of $M$
\begin{equation}
\ncapa_p(\partial D) =\inf\left\lbrace\frac{1}{4\pi }\left(\frac{p-1}{3-p}\right)^{p-1}\int_{M\smallsetminus D}\abs{\nabla\psi}^p\dif\Hff^{2}\,\,\bigg\vert\,\,\psi\in\CS^\infty_c(M),\psi\geq\chi_D\right\rbrace.
\end{equation}
Note that $\ncapa_p(\S^2) = 1$.

\begin{lemma}
For almost every $t\in [0,+\infty)$ we have
\begin{equation}\label{eq:ncapa_def}
\ncapa_p(\partial\Omega^{\sml{p}}_t)=\frac{1}{4\pi}\int_{\partial\Omega_t^{\sml{p}}}\left(\frac{\abs{\nabla w_p}}{3-p}\right)^{p-1}\dif\Hff^{2},
\end{equation}
and
\begin{equation}\label{eq:capa}
\ncapa_p(\partial\Omega^{\sml{p}}_t) =\ee^t\ncapa_p(\partial\Omega).\end{equation} 
\end{lemma}
We refer to \cite[Proposition 3.2.1]{tolksdorf_dirichletproblemquasilinearequations_1983} and \cite[Propositions 2.8 and 2.9]{benatti_minkowskiinequalitycompleteriemannian_2024} for the proof of these facts.

\subsection{Monotone quantities}
Let $w_p$ be the solution of the problem \cref{eq:moser_p_potential}, let $\Omega^{\sml{p}}_t=\left\lbrace w_p\leq t\right\rbrace\cup \Omega$ and denote by $\H$ the (weak) mean curvature with respect to the outward-pointing unit normal $\nu=\nabla w_{p}/\abs{\nabla w_p}$. For almost every $t\in [0,+\infty)$ we set
\begin{align}
    \MF_p(t)&= \int_{\partial\Omega^{\sml{p}}_t}\frac{\H^2}{4}-\left(\frac{\H}{2}-\frac{\abs{\nabla w_p}}{(3-p)}\right)^2\dif\Hff^{2},\label{monotone_F}\\
    \IF_p(t)&=\int_{\partial\Omega^{\sml{p}}_t}\frac{\abs{\nabla w_p}^{2}}{(3-p)^2}\dif\Hff^{2}.
\end{align}

Both $\MF_p(t)$ and $\IF_p$ are well-defined $L^1$-functions (for a detailed explanation see \cite[Remark 3.2 and Section 3.3]{benatti_finepropertiesnonlinearpotentials_2024}).
Observe that $\IF_p(t)\geq0$ and it is bounded on a manifold with nonnegative Ricci curvature. Indeed, by \cite[Theorem A]{wang_local_2010} we have a Cheng--Yau gradient bound for $w_p$: there exists a positive constant $\kst$ such that $\abs{\nabla w_p}(x) \leq \kst \, \dist(x,o)^{-1}$. Hence, by \cref{lower_bound} and \cref{eq:capa} we have
\begin{equation}
    \IF_p(t) \leq  \frac{4\pi\ncapa_p(\partial \Omega_t^{\sml{p}})}{(3-p)^{3-p}} \,\sup_{\partial \Omega_t^{\sml{p}}}\abs{\nabla w_p}^{3-p} \leq \kst\ee^{-t}\, \ncapa_p(\partial \Omega_t^{\sml{p}})  = \kst \,\ncapa_p(\partial \Omega).
\end{equation}

\begin{lemma}\label{lem:monotonicity}
Let $(M, g)$ be a Riemannian $3$-manifold and let $w_p$ be a solution to problem \eqref{eq:moser_p_potential}. Then, $\MF_p\in W^{1,1}_{\loc}(0,+\infty)$ and $\IF_p\in W^{2,1}_{\loc}(0,+\infty)$. Moreover, for almost every $t\in [0,+\infty)$, the following holds
    \begin{align}
        \MF^{'}_p(t)&=-\frac{1}{3-p}\int_{\partial\Omega^{\sml{p}}_t}\Ric(\nu,\nu)+\abs{\mathring{\h}}^{2}+\frac{\abs{\nabla^{\top}\abs{\nabla w_p}}^{2}}{\abs{\nabla w_p}^{2}}+\frac{3-p}{2(p-1)}\left(\H-\frac{2}{3-p}\abs{\nabla w_p}\right)^2d\Hff^{2}\label{eq:deriv_F}\\
        \IF_{p}^{'}(t)&=\frac{1}{p-1}\int_{\partial\Omega^{\sml{p}}_t}\frac{2\abs{\nabla w_p}^{2}}{(3-p)^2}-\frac{\H\abs{\nabla w_p}}{3-p} \dif\Hff^{2},\label{eq:deriv_G}
    \end{align}
    where  $\h$ is the second fundamental form of $\partial \Omega_{t}^{(p)}$, $\mathring{\h}$ is the traceless part of $\h$ and $\nabla^{\top}$ denotes the tangential part of the gradient with respect to $\partial \Omega^{\sml{p}}_t$. 
\end{lemma}
\begin{proof}
[Idea of the proof]
For the well-posedness and regularity of $\MF_p$ and $\IF_p$ we refer, for instance, to \cite[Section 3.3]{benatti_finepropertiesnonlinearpotentials_2024} and to \cite[Theorem 3.1 and Proposition 3.5]{benatti_minkowskiinequalitycompleteriemannian_2024}.  We sketch the (formal) computations that lead to \cref{eq:deriv_F} and \cref{eq:deriv_G}. Call 
\begin{align}
X&=\frac{\abs{\nabla w_p}\nabla w_p}{(3-p)^2},\\
Y&=\frac{1}{3-p}\left(\frac{\Delta w_p\nabla w_{p}}{\abs{\nabla w_p}}-\frac{\nabla\abs{\nabla w_p}^{2}}{2\abs{\nabla w_p}}\right)-X.
\end{align}
Then, by direct computation,
\begin{align}
        \div(X)&=\frac{\abs{\nabla w_p}}{p-1}\left(\frac{2\abs{\nabla w_p}^{2}}{(3-p)^2}-\frac{\H }{3-p}\right),\\
        \div(Y)&=
        -\frac{\Ric(\nabla w_p,\nabla w_p)}{\abs{\nabla w_p}}-\abs{\mathring{\h}}^{2}\abs{\nabla w_p}-\frac{\abs{\nabla^{\top}\abs{\nabla w_p}}^{2}}{\abs{\nabla w_p}}-\frac{3-p}{2(p-1)}\abs{\nabla w_p}\left(\H-\frac{2}{3-p}\abs{\nabla w_p}\right)^{2}.
\end{align}
    Then, we can write 
\begin{align}
   \IF_{p}(t)=\int_{\partial\Omega^{\sml{p}}_t} \ip{X|\frac{\nabla w_p}{\abs{\nabla w_p}}} \dif\Hff^{2},\qquad \MF_{p}(t)=\int_{\partial \Omega^{\sml{p}}_t}\ip{Y|\frac{\nabla w_p}{\abs{\nabla w_p}}}\dif\Hff^{2}.
\end{align}    
Hence for $s<t$ in $[0,+\infty)$ by the divergence theorem $\MF_{p}(t)-\MF_{p}(s)=\int_{\set{ s<w<t}}\div(Y)\dif\Hff^{2}$ and $\IF_{p}(t)-\IF_{p}(s)=\int_{\set{ s<w<t}}\div(X)\dif\Hff^{2}$. Thus, at least formally, we obtain \cref{eq:deriv_F} and \cref{eq:deriv_G}.
To give a precise meaning to this formal computation we need quite a bit of extra work. To this aim, we refer to \cite[Theorem 3.1]{benatti_finepropertiesnonlinearpotentials_2024}.  
\end{proof}

\begin{lemma}\label{lem:monotonicity2}
    Let $(M, g)$ be a Riemannian $3$-manifold with nonnegative Ricci curvature and let $w_p$ be a solution to problem \eqref{eq:moser_p_potential}. Both $\MF_p (t)$ and $\IF_p(t)$ are essentially monotone nonincreasing. Moreover, for almost every $t \in [0+\infty)$ we have  $0\leq \IF_p(t) \leq \MF_p(t)$  and 
    \begin{equation}\label{eq:derGin0}
        \IF_p'(0)= \frac{1}{p-1}\int_0^{+\infty} e^{-\frac{t}{p-1}}\MF_p'(t) \dif t.
    \end{equation}
\end{lemma}

\begin{proof}
If $\Ric \geq 0$, $\MF_p$ is manifestly monotone nonincreasing by \cref{lem:monotonicity}. To justify the sign of $\IF'_p$, we argue as in \cite{benatti_minkowskiinequalitycompleteriemannian_2024}. Observe that
\begin{equation}\label{eq:zzzderivative}
    (\ee^{-\frac{t}{p-1}}\IF_p'(t))' = - \frac{1}{p-1} \ee^{-\frac{t}{p-1}} \MF_p'(t)\geq 0.
\end{equation}
Hence, for every $s\leq t$ one has $\IF_p'(t)\geq \ee^{-\frac{s-t}{p-1}} \IF_p'(s)$, that integrated in $t$ over $[s,r]$ gives
\begin{equation}\label{eq:zzzintegratedGder}
    \IF_p(r)-\IF_p(s) \geq (p-1)\left( \ee^{-\frac{s-r}{p-1}} -1\right) \IF_p'(s).
\end{equation}
The existence of a $s\geq 0$ for which $\IF_p'(s)>0$ would be in contradiction to the boundedness of $\IF_p(t)$. Therefore, 
    \begin{equation}
         0\geq \IF_{p}^{'}(t)\overset{\cref{eq:deriv_G}}{=}\frac{1}{p-1}\left(\IF_{p}(t)-\MF_{p}(t)\right)
    \end{equation}
and in particular $0\leq \IF_p(t) \leq \MF_p(t)$ for almost every $t\in [0,+\infty)$.

Observe that $\ee^{-\frac{t}{-p-1}}\IF_p'(t)$ is nonpositive and monotone nondecreasing. If it does not vanish at infinity, we would have $\IF_p'(t) \leq -\kst \ee^{\frac{t}{p-1}}$ for some constant $\kst>0$. Integrating it would contradict that $\IF_p(t)$ is bounded from below. In conclusion, \cref{eq:derGin0} follows integrating \cref{eq:zzzderivative} on $[0,+\infty)$.
\end{proof}

\section{Proof of the main theorem}

\subsection{Weak Gauss-Bonnet theorem}

If $\Sigma$ is a smooth closed surface in $M$ 
with normal $\nu$, mean curvature $\H$ and traceless second fundamental form $\mathring{\h}$,  it holds
\begin{align} 
2\int_{\Sigma}\Ric(\nu,\nu)\dif\Hff^2
 &\geq\varepsilon\left(16\pi-\int_{\Sigma}\H^2\dif\Hff^2\right) &\quad\text{if}\;\mathrm{genus}(\Sigma)=0,\\
 2\int_{\Sigma}\Ric(\nu,\nu)+\abs{\mathring{\h}}^2\dif\Hff^2&\geq\int_{\Sigma}\H^2\dif\Hff^2&\quad\text{if}\;\mathrm{genus}(\Sigma)\geq 1.
\end{align}

These two inequalities follow combining the Gauss equation 
\begin{equation}\label{eq:Gauss}
      \sca^\top = \sca - 2 \Ric(\nu,\nu) + \H^2 - \abs{\h}^2
\end{equation}
with the Gauss--Bonnet Theorem, and the pinching condition \cite[Lemma 8]{huisken_inversemeancurvatureflow_2023}. These inequalities are crucial both in \cite{huisken_inversemeancurvatureflow_2023} and in \cite{benatti_notericcipinchedthreemanifolds_2024}.  Note that in both papers the considered flow is regular enough to perform the desired estimates. Indeed, in the former case, the level sets of the weak inverse mean curvature flow are $C^{1,\beta}$, and they can be approximated in $W^{2,2}$ by smooth surfaces. In the latter case, harmonic functions are smooth and Sard's theorem applies. Unfortunately, in the current situation we cannot infer the regularity of the level sets required to apply the classical Gauss--Bonnet theorem. 

We recall that the set $\partial\Omega^{\sml{p}}_t\cap \left\lbrace\abs{\nabla w_p}=0\right\rbrace$ is $\Hff^{2}$-negligible. Outside the critical set, $w_p$ is smooth and so the induced scalar curvature is well-defined. On the other hand, all terms appearing in the right-hand side of \cref{eq:Gauss} are well-defined integrable functions on the level sets of $w_p$ thanks to \cite[Theorem 3.1]{benatti_finepropertiesnonlinearpotentials_2024}.  Hence, \cref{eq:Gauss} can be adopted as a definition of the induced scalar curvature $\sca^\top$ on the whole level set. The following statement serves as a weak replacement for the classic Gauss-Bonnet theorem. It states that the integral of $\sca^\top$ is essentially discrete with values in the same set that the classical Gauss--Bonnet theorem would give. For a proof of the theorem, we refer the reader to \cite[Theorem 1.3]{benatti_finepropertiesnonlinearpotentials_2024}.

\begin{theorem}
     Let $(M,g)$ be a Ricci-pinched $3$-dimensional Riemannian manifold with superquadratic volume growth. Let $p \in (1,2)$, $w_p$ be the proper solution to \cref{eq:moser_p_potential}. Then, for almost every $t \in [0,+\infty)$  it holds
    \begin{equation}
        \int_{\partial \Omega^{\sml{p}}_t} \sca^{\top} \dif \Hff^2 \in 8 \pi \Z.
    \end{equation}
      \end{theorem}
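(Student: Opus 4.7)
The plan is to reduce the claim to the classical Gauss-Bonnet theorem on smooth closed orientable $2$-surfaces, via an approximation argument tailored to the Sard-type regularity available for the level sets $\partial\Omega^{\sml{p}}_t$ at almost every height $t$.

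First, recall that $w_p$ is smooth on the open set $\{\abs{\nabla w_p}\neq 0\}$, that $\abs{\nabla w_p}^{p-1}\in W^{1,2}_{\loc}$, and that for almost every $t\in[0,+\infty)$ the intersection of $\partial\Omega^{\sml{p}}_t$ with the critical set $\{\abs{\nabla w_p}=0\}$ is $\Hff^{2}$-negligible. For such a value of $t$, my strategy is to construct a sequence of smooth closed orientable hypersurfaces $\Sigma_k$ approximating $\partial\Omega^{\sml{p}}_t$ in varifold sense and with $L^1$-convergence of the induced scalar curvature $\sca^\top$. A natural construction is to mollify $w_p$ outside a shrinking neighborhood of the critical set and use Sard's theorem to select regular values $t_k\to t$ of the smoothed functions, so that each $\Sigma_k$ is smooth and compact (the latter being guaranteed by the growth bound \cref{upper_bound}).

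Second, on every smooth closed orientable $2$-surface $\Sigma\subset M$ the Gauss equation \cref{eq:Gauss} reduces to $\sca^\top=2K_\Sigma$, where $K_\Sigma$ is the intrinsic Gaussian curvature, and the classical Gauss-Bonnet theorem yields
\begin{equation}
\int_{\Sigma_k}\sca^\top\dif\Hff^{2} \,=\, 4\pi\,\chi(\Sigma_k) \,=\, 8\pi\sum_{j}\bigl(1-g_j\bigr) \,\in\, 8\pi\Z,
\end{equation}
where the sum runs over the connected components of $\Sigma_k$ with their respective genera $g_j$. Since $8\pi\Z\subset\mathbb{R}$ is closed, the conclusion follows once one proves that $\int_{\Sigma_k}\sca^\top\dif\Hff^{2}\to\int_{\partial\Omega^{\sml{p}}_t}\sca^\top\dif\Hff^{2}$ as $k\to\infty$.

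The hard part is precisely this convergence. It demands simultaneously an $L^1$-control of $\sca^\top$ on each $\Sigma_k$ — which one hopes to derive from $\abs{\nabla w_p}^{p-1}\in W^{1,2}_{\loc}$ combined with the $L^2$-bound on the second fundamental form implicit in the monotonicity \cref{eq:deriv_F} — and a uniform topological bound on $\Sigma_k$, needed to prevent the integer $\chi(\Sigma_k)$ from drifting to $\pm\infty$. The latter is the most delicate point: one has to rule out the creation of arbitrarily many handles near the critical set during the mollification process. I expect to handle this by a diagonal selection matching regular values of $w_p$ and of the approximants, together with uniform area and mean curvature estimates deduced from the coarea formula applied to $w_p$.
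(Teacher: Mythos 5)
First, a point of reference: the paper does not prove this statement at all --- it is imported verbatim from \cite[Theorem 1.3]{benatti_finepropertiesnonlinearpotentials_2024}, so there is no internal proof to compare against. Judged on its own terms, your proposal is a reasonable high-level outline (approximate, apply classical Gauss--Bonnet, use that $8\pi\Z$ is closed), but it is not a proof: the two steps you yourself identify as ``the hard part'' --- the $L^1$-convergence $\int_{\Sigma_k}\sca^\top\to\int_{\partial\Omega^{\sml{p}}_t}\sca^\top$ and the prevention of topology drifting off to infinity near the critical set --- are exactly the content of the cited theorem, and you only say you ``expect to handle'' them. Since $\chi(\Sigma_k)$ is an integer, convergence of the integrals would indeed force the value into $8\pi\Z$, so the entire theorem is concentrated in the convergence claim you leave open. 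Nothing in the regularity you invoke ($w_p\in\mathscr{C}^{1,\beta}$, $\abs{\nabla w_p}^{p-1}\in W^{1,2}_{\loc}$, $\Hff^2$-negligibility of the critical points on a.e.\ level set) hands you $W^{2,2}$-control of the approximants near $\{\abs{\nabla w_p}=0\}$; varifold convergence plus an $L^2$ bound on $\h$ away from the critical set does not give $L^1$-convergence of $\sca^\top$ across it, and it certainly does not bound the genus of the approximating surfaces.

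There is also a structural problem with the specific construction. Mollifying $w_p$ in the ambient manifold and taking regular values of the mollification gives you no quantitative relation between the second fundamental forms (hence the Gaussian curvatures) of $\Sigma_k$ and the weak second fundamental form of $\partial\Omega^{\sml{p}}_t$ near the critical set --- precisely where the level set can fail to be a manifold and where handles can be created or destroyed. The argument in \cite{benatti_finepropertiesnonlinearpotentials_2024} instead works intrinsically on the fixed level set: it excises a small neighborhood $\{\abs{\nabla w_p}\leq\delta\}$ of the critical set, applies Gauss--Bonnet with boundary on the resulting smooth surface with boundary, and shows that along a suitable sequence $\delta\to 0$ the geodesic-curvature boundary terms vanish while the Euler characteristic contribution stays in $2\Z$ --- which is also why the conclusion is membership in $8\pi\Z$ rather than an identification with $4\pi\chi$ of anything. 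Your ambient-mollification route would need all of that machinery rebuilt, plus additional estimates to compare the mollified level sets with the true one. As it stands, the proposal restates the difficulty rather than resolving it.
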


\begin{lemma}
    Let $(M,g)$ be a Ricci-pinched $3$-dimensional Riemannian manifold with superquadratic volume growth. Let $p \in (1,2)$, $w_p$ be the proper solution to \cref{eq:moser_p_potential}. Then, for almost every $t \in [0,+\infty)$  it holds
\begin{align} 
2\int_{\partial \Omega^{\sml{p}}_t}\Ric(\nu,\nu)\dif\Hff^2
 &\geq\varepsilon\left(16\pi-\int_{\partial \Omega^{\sml{p}}_t}\H^2\dif\Hff^2\right) &&\text{if }\int_{\partial \Omega^{\sml{p}}_t} \sca^\top\dif\Hff^{2}\geq 8\pi,\label{eq:genere_zero}\\
 2\int_{\partial \Omega^{\sml{p}}_t}\Ric(\nu,\nu)+\abs{\mathring{\h}}^2\dif\Hff^2&\geq\int_{\partial \Omega^{\sml{p}}_t}\H^2\dif\Hff^2&&\text{if }\int_{\partial \Omega^{\sml{p}}_t} \sca^\top\dif\Hff^{2}\leq 0.\label{eq:genere_maggiore_uno}
\end{align}
\end{lemma}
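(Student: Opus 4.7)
The plan is to derive both inequalities from an integrated form of the Gauss equation \eqref{eq:Gauss}, combined with the preceding weak Gauss--Bonnet theorem and the Ricci--pinching condition, along the lines of the smooth argument in Huisken--K\"{o}rber \cite[Lemma 8]{huisken_inversemeancurvatureflow_2023}. On the regular part of $\partial\Omega^{\sml{p}}_{t}$ (which is $\Hff^{2}$-full for a.e. $t$), the two-dimensional trace-free decomposition gives $\abs{\h}^{2}=\abs{\mathring{\h}}^{2}+\H^{2}/2$, so \eqref{eq:Gauss} rewrites as
\[
\sca^{\top}=\sca-2\Ric(\nu,\nu)+\tfrac{1}{2}\H^{2}-\abs{\mathring{\h}}^{2}.
\]
Integrating and invoking the weak Gauss--Bonnet theorem, which forces $\int\sca^{\top}\dif\Hff^{2}\in 8\pi\Z$, the integral hypotheses $\int\sca^{\top}\dif\Hff^{2}\geq 8\pi$ and $\int\sca^{\top}\dif\Hff^{2}\leq 0$ exhaust all possibilities and take the place of the smooth genus dichotomy.

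For \eqref{eq:genere_zero}, I would discard the non-positive term $-\int\abs{\mathring{\h}}^{2}\dif\Hff^{2}$ and apply the pointwise bound $\sca\leq\Ric(\nu,\nu)/\varepsilon$ (which is the reformulation of $\Ric\geq\varepsilon\sca g$ in the direction $\nu$) to obtain
\[
8\pi\leq\Bigl(\tfrac{1}{\varepsilon}-2\Bigr)\int\Ric(\nu,\nu)\dif\Hff^{2}+\tfrac{1}{2}\int\H^{2}\dif\Hff^{2}.
\]
Multiplying through by $\varepsilon>0$ and using $1-2\varepsilon\leq 1$ (permissible because $\Ric\geq 0$ together with the pinching forces $\varepsilon\leq 1/3$) yields the claimed estimate $2\int\Ric(\nu,\nu)\dif\Hff^{2}\geq\varepsilon(16\pi-\int\H^{2}\dif\Hff^{2})$.

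The case \eqref{eq:genere_maggiore_uno} is the more delicate one. From $\int\sca^{\top}\dif\Hff^{2}\leq 0$ one gets
\[
\tfrac{1}{2}\int\H^{2}\dif\Hff^{2}\leq 2\int\Ric(\nu,\nu)\dif\Hff^{2}-\int\sca\dif\Hff^{2}+\int\abs{\mathring{\h}}^{2}\dif\Hff^{2},
\]
and the naive estimate $\sca\geq 0$ loses a factor of two. The subtle step, which I expect to be the main obstacle, is the pointwise refinement $\Ric(\nu,\nu)\leq\sca$: in an orthonormal frame completing $\nu$, the three diagonal entries of $\Ric$ are non-negative under $\Ric\geq 0$ and sum to $\sca$. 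Substituting $-\int\sca\dif\Hff^{2}\leq-\int\Ric(\nu,\nu)\dif\Hff^{2}$ then gives exactly $\tfrac{1}{2}\int\H^{2}\dif\Hff^{2}\leq\int\Ric(\nu,\nu)\dif\Hff^{2}+\int\abs{\mathring{\h}}^{2}\dif\Hff^{2}$, which is the desired inequality.

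The remaining technical point is to guarantee that the pointwise Gauss equation is meaningful on $\partial\Omega^{\sml{p}}_{t}$, which is only $\mathscr{C}^{1,\beta}$ away from a set that is $\Hff^{2}$-negligible for a.e. $t$. On this regular portion the classical identities hold, while the framework of \cite{benatti_finepropertiesnonlinearpotentials_2024} provides the weak mean curvature, the weak second fundamental form and the quantization of $\int\sca^{\top}\dif\Hff^{2}$ required to perform the integration and the case split rigorously.
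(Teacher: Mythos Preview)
Your proposal is correct and follows essentially the same route as the paper: both arguments integrate the Gauss equation in the form $\sca^{\top}=\sca-2\Ric(\nu,\nu)+\tfrac12\H^{2}-\abs{\mathring{\h}}^{2}$, invoke the pointwise bound $\Ric(\nu,\nu)\leq\sca$ (from $\Ric\geq 0$) in the case $\int\sca^{\top}\leq 0$, and the pinching bound $\sca\leq\Ric(\nu,\nu)/\varepsilon$ together with the discarding of nonnegative terms in the case $\int\sca^{\top}\geq 8\pi$. The only cosmetic difference is that the paper drops the term $4\int\Ric(\nu,\nu)\geq 0$ directly, whereas you carry the factor $(\tfrac{1}{\varepsilon}-2)$ and then use $1-2\varepsilon\leq 1$; these are the same estimate.
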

\begin{proof}
We can write the Gauss equation \cref{eq:Gauss} in integral from, that is
    \begin{equation}
        \int_{\partial \Omega^{\sml{p}}_t} \H^2\dif\Hff^2= \int_{\partial \Omega^{\sml{p}}_t} 2\sca^\top-2\sca+4\Ric(\nu,\nu)+2\abs{\mathring{\h}}^2\dif\Hff^2.
    \end{equation}
Since $\Ric\geq 0$, it follows that $\Ric(\nu,\nu)-\sca\leq 0$. Then, if $\int_{\partial \Omega^{\sml{p}}_t} \sca^\top\leq 0$ we trivially get
\begin{equation}
  \int_{\partial \Omega^{\sml{p}}_t} \H^2\dif\Hff^2=  \int_{\partial \Omega^{\sml{p}}_t} 2\sca^\top-2\sca+2\Ric(\nu,\nu)+2\Ric(\nu,\nu)+2\abs{\mathring{\h}}^2\dif\Hff^2\leq   \int_{\partial \Omega^{\sml{p}}_t} 2\Ric(\nu,\nu)+2\abs{\mathring{\h}}^2\dif\Hff^2.
\end{equation}
Conversely, if $\int_{\partial \Omega^{\sml{p}}_t} \sca^\top\geq 8\pi$, using the pinching condition, we obtain
    \begin{equation}
      \int_{\partial \Omega^{\sml{p}}_t} \H^2\dif\Hff^2=   \int_{\partial \Omega^{\sml{p}}_t} 2\sca^\top-2\sca+4\Ric(\nu,\nu)+2\abs{\mathring{\h}}^2\dif\Hff^2\geq 16\pi -\frac{1}{\varepsilon} \int_{\partial \Omega^{\sml{p}}_t} 2\Ric(\nu,\nu)\dif\Hff^2,
    \end{equation}
 as desired.   
\end{proof}

\subsection{Asymptotic behavior of $\MF_p$} 
Let $w_p$ be a solution to \eqref{eq:moser_p_potential} for a set $\Omega$ with smooth boundary. In the following lemma, we exhibit the asymptotic behavior of $\MF_p$ under the assumption $\MF_p(0)<4\pi$. 

\begin{lemma}\label{lemma 2}
   Let $(M,g)$ be a Ricci-pinched $3$-dimensional Riemannian manifold with superquadratic volume growth. Assume $\MF_p(0)<4\pi$. There exist $T_0\in[0,+\infty)$ and a positive constant $\kst=\kst(T_0)$ such that for almost every $t\geq T_0$, there holds $\MF_{p}(t)\leq \kst\ee^{-\frac{2}{3-p}t}$.
\end{lemma}
\begin{proof}
    For almost every $t\in [0,+\infty)$, we know that $\int_{\partial \Omega^{\sml{p}}_t} \sca^{\top} \dif \Hff^{2} \in 8 \pi \Z$. If $\int_{\partial\Omega^{\sml{p}}_t} \sca^{\top}\dif\Hff^{2} \leq 0$, combining the expression \cref{eq:deriv_F} with the estimate \cref{eq:genere_maggiore_uno}, we obtain
    \begin{align}
        -2(3-p)\MF_p^{'}(t)\geq 2\int_{\partial\Omega^{\sml{p}}_t}\Ric(\nu,\nu)+\abs{\mathring{\h}}^{2}\dif\Hff^{2}\geq \int_{\partial\Omega^{\sml{p}}_t}\H^{2}\dif\Hff^{2}\geq 4\MF_{p}(t).
    \end{align}
    If $\int_{\partial\Omega^{\sml{p}}_t} \sca^{\top}\dif\Hff^{2} \geq 8\pi$, by \cref{eq:deriv_F} and the estimate \cref{eq:genere_zero}, we have 
    \begin{align}
        -2(3-p)\MF_{p}^{'}(t)&\geq \int_{\partial \Omega^{\sml{p}}_t}2 \Ric(\nu,\nu)+\frac{3-p}{p-1}\left(\H-\frac{2}{3-p}\abs{\nabla w_p}\right)^2\dif\Hff^{2}\\
        &\geq \int_{\partial\Omega^{\sml{p}}_t}2\Ric(\nu,\nu)+\varepsilon \left(\H-\frac{2}{3-p}\abs{\nabla w_p}\right)^{2}\dif\Hff^{2}\\
        &\geq \varepsilon\left(16\pi-\frac{4}{3-p}\int_{\Omega^{\sml{p}}_t}\H\abs{\nabla w_p}-\frac{\abs{\nabla w_p}^{2}}{3-p}\dif\Hff^{2}\right)\\
        &\geq \varepsilon\left(16\pi-4\MF_{p}(t)\right)
    \end{align}
    where, up to decreasing $\varepsilon$, we can suppose $\varepsilon\leq \frac{1}{3}<\frac{3-p}{p-1}$.
     Then, we conclude that
    \begin{align}
        \MF_p^{'}(t)\leq \max\left\lbrace-\frac{2}{3-p}\MF_p(t),-\frac{2\varepsilon}{3-p}\left(4\pi-\MF_p(t)\right)\right\rbrace
    \end{align}
    holds for almost every $t\in[0,+\infty)$. 

    If $\MF_p(t) \geq \varepsilon (4 \pi - \MF_p(t))$ for all $t\geq 0$, we would have $\MF_p(t) \geq 4\pi\varepsilon/(1+\varepsilon)$ and
        \begin{equation}
            \MF_{p}^{'}(t)\leq -\frac{2\varepsilon }{3-p}\left(4\pi- \MF_{p}(t)\right)\leq -\frac{2\varepsilon }{3-p}\left(4 \pi -\MF_{p}(0)\right) <0,
        \end{equation}
        which is not possible. Therefore, there must be a $T_0\geq 0 $ such that $\MF_p(T_0) \leq \varepsilon (4 \pi - \MF_p(T_0))$. Since $\MF_p$ is a $W^{1,1}_{\loc}$ function and it is monotone nonincreasing $\MF_p(t) \leq \varepsilon (4 \pi - \MF_p(t))$ must hold for all $t \geq T_0$, implying $\MF'_p(t) \leq -\frac{2}{3-p}\MF_p(t)$ and consequently $\MF_{p}(t)\leq \kst(T_0) \ee^{-\frac{2}{3-p}t}$ for all $t \geq T_0$.
\end{proof}

\subsection{Proof of \cref{main}}
Assume $(M,g)$ is orientable and there exists a set $\Omega$ and a solution to \cref{eq:moser_p_potential} such that $\MF_p(0)<4\pi$.
By equations \cref{eq:capa} and \cref{eq:ncapa_def} and H\"older inequality, for almost every $t\in [0,+\infty)$, we obtain 
\begin{align}
 \ee^{t}\ncapa_{p}(\partial\Omega^{\sml{p}})\overset{\cref{eq:capa}}&{=}\ncapa_{p}(\partial\Omega_{t}^{(p)})\overset{\cref{eq:ncapa_def}}{=}\frac{1}{4\pi}\int_{\partial\Omega^{\sml{p}}_t}\left(\frac{\abs{\nabla w_p}}{3-p}\right)^{p-1}\dif\Hff^2 \\
 &\leq  \frac{1}{4\pi}\frac{1}{(3-p)^{p-1}}\left(\int_{\partial\Omega^{\sml{p}}_t}\abs{\nabla w_p}^{2}\dif\Hff^2\right)^{\frac{p}{3}}\left(\int_{\partial\Omega^{\sml{p}}_t }\abs{\nabla w_{p}}^{-1}\dif\Hff^2\right)^{\frac{3-p}{3}}.
\end{align}
From \cref{lem:monotonicity2,lemma 2}, we know that there exists a $T_{0}\in [0,+\infty)$ and a positive constant $\kst$ such that for almost all $t\in[T_{0},+\infty)$, the following holds
\begin{equation}
    \int_{\partial\Omega^{\sml{p}}_t}\frac{\abs{\nabla w_p}^{2}}{(3-p)^2}\dif\Hff^{2}=\IF_{p}(t)\leq \kst\ee^{-\frac{2}{3-p}t}.
\end{equation}
Thus, by the coarea formula, we obtain 
\begin{equation}
    \frac{\dif}{\dif t}\Vol\left(\left\lbrace w_p\leq t\right\rbrace\smallsetminus \left\lbrace\abs{\nabla w_p}=0\right\rbrace\right)=\int_{\partial\Omega^{\sml{p}}_t}\abs{\nabla w_p}^{-1}d\Hff^{2}\geq 
    \kst\ee^{\frac{9-p}{(3-p)^{2}}t}
\end{equation}
for almost every $t\in[0,+\infty)$. 

Consider $R_{t}=\sup\left\lbrace\dist (x,o):w_p(x) \leq t \right\rbrace$ for any $t\in [0,+\infty)$. Let $T_{1}\in (T_{0},+\infty)$. By integrating the above inequality over $[T_{0},T_{1}]$ and using the superquadratic volume growth condition, we get
\begin{equation} \label{eq:inequality}
    \kst\left(\ee^{\frac{9-p}{(3-p)^{2}}T_{1}}-\ee^{\frac{9-p}{(3-p)^{2}}T_{0}}\right)\leq \Vol\left(\left\lbrace w_p\leq T_1\right\rbrace\smallsetminus \left\lbrace\abs{\nabla w_p}=0\right\rbrace\right)\leq \Vol(B_{R_{T_{1}}}(o))\leq \kst_{\mathrm{vol}}R_{T_{1}}^{1+\alpha}.
\end{equation}
Since $u_p=\ee^{-\frac{w_p}{p-1}}$, by estimate \cref{upper_bound}, we have 
\begin{equation}
   \ee^{-\frac{w_p(x)}{p-1}}\leq \kst\dist(x,o)^{-\frac{\alpha+1-p}{p-1}}.
\end{equation}
Taking the supremum over all $x\in\partial\Omega_{T_{1}}^{\sml{p}}$, we obtain $R_{T_1}^{\alpha+1-p}\leq \kst\ee^{T_1}$, implying $R_{T_1}^{1+\alpha}\leq \kst\ee^{\frac{1+\alpha}{\alpha+1-p}T_1}$, for a positive constant $\kst=\kst(M,\Omega)$. Therefore, by inequality \cref{eq:inequality}, we conclude that
\begin{equation}\label{eq:contradiction_inequality}
 \kst\left(\ee^{\frac{9-p}{(3-p)^2} T_1}- \ee^{\frac{9-p}{(3-p)^2} T_0}\right)\leq \kst_{\mathrm{vol}}R_{T_1}^{1+\alpha}\leq \kst' \ee^{\frac{1+\alpha}{1+\alpha-p}{T_1}},  
\end{equation}
for some $\kst'=\kst'(M,\Omega)$.
This is not possible for $T_1$ sufficiently large, provided $\alpha>4/(5-p)$. Therefore, $\alpha \leq 4/(5-p) $ for every $p \in (1,2)$. Then, $\alpha \leq 1$ which contradicts the superquadratic growth assumption.

\smallskip

Suppose now by contradiction that $(M,g)$ is not flat.  Then there must exist a point $o\in M$ with $\sca(o)>0$ and a radius $r\ll 1$ such that $\partial B_{r}(o)$ is a smooth surface with
\begin{equation}\label{F(0)}
\int_{\partial B_{r}(o)}\H^2\dif\Hff^{2}<16\pi.
\end{equation} 
This can be obtained by the expansion in normal coordinates in perturbed spheres, see, for instance, \cite[Proposition 3.1]{mondino_existencecriticalpointsWillmore_2010} and \cite{fan_largespheresmallspherelimitsbrownyork_2009}. 
Set $\Omega = \overline{B_r(o)}$ in \cref{eq:moser_p_potential} and let $w_p$ be the resulting solution. By the Hopf lemma, zero is a regular value for $w_p$.  In particular,
\begin{equation}
    \MF_p(0) = \int_{\partial B_{r}(o)}\frac{\H^2}{4} - \left(\frac{\H}{2} - \frac{\abs{\nabla w_p}}{3-p}\right)^2\dif\Hff^{2} < 4 \pi.
\end{equation}
$(M,g)$ must be isometric to the Euclidean space since it is the only flat Riemannian $3$-manifold with superquadratic volume growth. This completes the proof in the setting of orientable manifolds. If $(M,g)$ is nonorientable, then its orientable double cover is Ricci-pinched and has superquadratic volume growth. Hence, it must be Euclidean space. This yields a contradiction, since $\R^3$ cannot be the double cover of any nonorientable Riemannian $3$-manifold. \hfill \qed

\begin{remark}[Superquadratic volume growth] Theorem 12 in
    \cite{huisken_inversemeancurvatureflow_2023} is proved under a set of conditions different from \cref{superquadratic}, namely
    \begin{equation}\label{eq:volume_least_growth}
        \liminf_{r \to +\infty} \frac{\abs{B_r(q)}}{r^{1+\alpha}} >0
    \end{equation}
    for some $q \in M$ and 
    \begin{equation}\label{eq:noncollapsed}
        \inf_{p \in M} {\abs{B_1(p)}}>0.
    \end{equation}
    
    It is easy to show that \cref{eq:volume_least_growth} implies the lower bound in \cref{superquadratic}, which is enough to ensure the existence of solution to \cref{eq:p_harmonic_potential} and provide an upper barrier \cref{upper_bound} for it. Beyond this implication, no clear connection seems to exist between the other assumptions.

    \cref{thmpinchingtheorem} can be proved replacing \cref{superquadratic} with \cref{eq:volume_least_growth} and \cref{eq:noncollapsed}. Indeed, these two conditions imply the validity of the isoperimetric inequality \cite[Theorème 3]{coulhon_isoperimetrie_1993}
    \begin{equation}
        \gamma \min\set{\abs{E}^{\frac{2}{3}}, \abs{E}^{\frac{\alpha}{1+\alpha}}} \leq \abs{\partial E},
    \end{equation}
    for some $\gamma >0$.
    By a symmetrization argument (see \cite[Theorem 4.1]{benatti_minkowskiinequalitycompleteriemannian_2024} for example), one can deduce the following isocapacitary inequality
    \begin{equation}\label{eq:isocapacitary}
        \gamma_p \min\set{\abs{E}^{\frac{3-p}{3}}, \abs{E}^{\frac{\alpha+1-p}{1+\alpha}}} \leq \ncapa_p(\partial E),
    \end{equation}
    for some $\gamma_p >0$. Employing \cref{eq:isocapacitary} instead of \cref{superquadratic} in \cref{eq:inequality} one gets \cref{eq:contradiction_inequality} since
    \begin{equation}
        \kst\left(\ee^{\frac{9-p}{(3-p)^{2}}T_{1}}-\ee^{\frac{9-p}{(3-p)^{2}}T_{0}}\right)\leq \gamma_p\Vol(\set{w_p \leq T_1})\leq  \ncapa_p(\set{w_p \leq T_1})^{\frac{1+\alpha}{\alpha +1 -p }}=\kst' \ee^{\frac{1+\alpha}{\alpha +1 -p }T_1},
    \end{equation}
    for a constant $\kst' = \kst'(M, \Omega)$. 

\smallskip

    Huisken and K\"orber also discuss another possible condition under which their theorem can be proved in \cite[Remark 11]{huisken_inversemeancurvatureflow_2023}. That condition implies our \cref{superquadratic}.
\end{remark}

\begin{remark}
Both Deruelle-Simon-Schulze and Lee-Topping considered a generalization to higher dimensions of the Ricci-pinching theorem (requiring the manifold to be PIC1) \cite{deruelle_Hamilton-Lott_2024,lee_manifoldspic1pinchedcurvature_2025}. Their approach is via Ricci flow. It would be interesting to investigate whether the inverse mean curvature flow/nonlinear potential theory could also be applied in this context.
\end{remark}

\subsection{The case of manifolds with boundary}

To the best of the authors’ knowledge, the analogue of \cref{main} for manifolds with boundary has not been systematically investigated so far, except for the recent advances in \cite{benatti_notericcipinchedthreemanifolds_2024}. It is therefore natural to ask whether, also in the presence of a boundary, a Ricci-pinching condition is sufficient to enforce strong geometric rigidity, or whether the boundary allows for greater flexibility. In the same spirit of \cref{thmpinchingtheorem}, we might want to first answer the following question.
\begin{question}\label{question1}
    Let $(M,g)$ be a complete, connected, noncompact, orientable, Ricci-pinched Riemannian $3$-manifold with compact smooth boundary $\partial M \neq \varnothing$. Suppose that $(M,g)$ has superquadratic volume growth. Is $(M,g)$ isometric to the exterior of a compact set in the Euclidean space?
\end{question}

The answer to \cref{question1} is affirmative if one imposes an additional assumption on $\partial M$.

\begin{proposition}\label{thm:boundary}
Let $(M,g)$ be a complete, connected, orientable, noncompact, Ricci-pinched Riemannian $3$-manifold. Suppose that $(M,g)$ has superquadratic volume growth and 
\begin{equation}\label{F(0)bis2}
\int_{\partial M}\H^2\dif\Hff^2\leq 16\pi.
\end{equation}
Then, $(M,g)$ is isometric to the exterior of a ball in the flat Euclidean space and equality holds in \cref{F(0)bis2}.
\end{proposition}

\begin{proof}
    
    Set $\Omega = \partial M$ and $w_p$ be the solution to \cref{eq:moser_p_potential}. If $\MF_p(0)<4\pi$, the analysis provided in the first part of the proof of \cref{main} implies that such a manifold cannot exist. We only have to discuss the case $\MF_p(0)=4\pi$, which gives
    \begin{align}\label{eq:zzstarting}
        \int_{\partial M}\H^2\dif\Hff^2=16\pi && \int_{\partial M}\left(\frac{\H}{2} - \frac{\abs{\nabla w_p}}{3-p }\right)^2\dif\Hff^2=0.
    \end{align}
    In particular, these identities imply $\IF_p'(0)=0$. By \cref{eq:derGin0}, we have that $\MF_p'(t)\equiv 0$ for all $ t \in[0,+\infty)$. The argument is now classical and is based on \cite{huisken_inverse_2001} (see also \cite{benatti_minkowskiinequalitycompleteriemannian_2024}). Let $T$ be the supremum of all $\tau$ for which $w_p$ is smooth without critical point in $\set{0 \leq w_p \leq \tau}$. Then, $T>0$ by Hopf lemma and the continuity of $\abs{\nabla w_p}$. Since $\MF_p'(t)\equiv 0$, one has
    \begin{align}\label{eq:zzzidentitiesgeometric}
        \nabla^\top \abs{\nabla w_p}=0, && \mathring{\h}_t=0, && 2 \abs{\nabla w_p}=(3-p) \H_t, && \Ric(\nu,\nu)=0
    \end{align}
    on $\partial \Omega^{\sml{p}}_t$ for every $t \in [0,T)$, where we used the subscript $t$ to denote the geometric quantities of the respective level set. Hence, $\abs{\nabla w_p}$ (and so is $\H_t$) is positive and constant on $\partial \Omega^{\sml{p}}_t$. By Gauss' lemma applied on level sets of $w_p$, $\set{0\leq w_p< T}$ is diffeomorphic to $\partial M \times [0,T)$ and the metric $g$ can be written as
    \begin{equation}
        g = \frac{\dif t \otimes \dif t }{\abs{\nabla w_p}^2} + g_t =  \frac{4\dif t \otimes \dif t }{(3-p)^2\H_t^2} + g_t.
    \end{equation}
    The evolution equations yield
    \begin{equation}
       \frac{\partial}{\partial t} \H_t = -\Delta_{\partial \Omega^{\sml{p}}_t} \frac{1}{\abs{\nabla w_p} } - \frac{1}{\abs{\nabla w_p}}\left( \abs{\h_t}^2+ \Ric(\nu,\nu)\right)\overset{\cref{eq:zzzidentitiesgeometric}}{=}- \frac{\H_t}{3-p}.
    \end{equation}
    Integrating it, one gets $\H_t= \ee^{-\frac{t}{3-p}} \H_0$. In particular, $\abs{\nabla w_p} \geq \ee^{-\frac{t}{3-p}}\H_0$ on $\set{0 \leq w_p < T}$ hence $T$ cannot be finite. Moreover, integrating the system of partial differential equations
    \begin{equation}
        \frac{\partial}{\partial t} g_{t} =  \frac{2\h_{t} }{\abs{\nabla w_p}}= \frac{\H_t}{\abs{\nabla w_p}}  g_{t}=\frac{2}{3-p} g_{t},
    \end{equation}
    one has $g_{t} = e^{\frac{2t}{3-p}}g_{\partial M}$. Taking $r = \frac{2}{\H_0}\ee^{\frac{t}{3-p}} $ and recalling \cref{eq:zzstarting} we obtain that $(M,g)$ is isometric to 
    \begin{equation}
        \left([0,+\infty) \times \partial M, \dif r \otimes \dif r + \left(\frac{r}{r_0} \right)^2 g_{\partial M}\right),\qquad \text{where } r_0 = \sqrt{\frac{\abs{\partial M}}{4 \pi}}.
    \end{equation}
    Since $\Ric \geq0 $, one can infer $\Ric_{\partial M}\geq (1/r_0^2)g_{\partial M}$. Moreover, $\abs{\partial M } = 4 \pi r_0^2 = \abs{\S^2(r_0)}$. By the rigidity in Bishop--Gromov theorem $(\partial M, g_{\partial M})$ is isometric to $(\S^2, r^2_0 g_{\S^{2}})$ and, in conclusion, $(M,g)$ is isometric to $(\R^3\smallsetminus \set{\abs{x} \leq r_0}, g_{\mathrm{Euc}})$.
\end{proof}

\begin{remark}\label{rmk:nonorientablecone}

    In \cref{thm:boundary}, it is essential that $(M,g)$ is orientable otherwise the contradiction argument used at the end of the proof of \cref{main} does not apply.  The cone over $\mathbb{R}\mathbb{P}^2$ yields a counterexample to the theorem without the orientability assumption. This highlights that all requirements in \cref{question1} are truly essential. 

    On the other hand, one can obtain an \emph{ad hoc} variant of \cref{thm:boundary} within the class of nonorientable manifolds by passing to the orientable double cover. The cone over $\mathbb{R}\mathbb{P}^2$ turns out to be the only flat Ricci-pinched manifold with superquadratic volume growth for which
    \begin{equation}
    \int_{\partial M} \H^2 \dif \Hff^2 \leq 8\pi.
    \end{equation}
    As expected, the presence of the boundary seems to allows for a greater variety of examples, but it is still subject to some limitations.
\end{remark}

\begin{remark} 
For a manifold with boundary, nonnegative Ricci curvature and Euclidean volume growth, one can deduce from \cite{agostiniani_sharpgeometricinequalitiesclosed_2020,wang_inequality} the following Willmore inequality
\begin{equation}\label{eq:willmorelitterature}
\int_{\partial M}\H^2\dif\Hff^2 \geq 16\pi\AVR(g).
\end{equation}
 \cref{thm:boundary} can be rephrased by saying that Ricci-pinched manifold with superquadratic volume growth can only satisfy the Euclidean case of \cref{eq:willmorelitterature}, that is $\AVR(g)=1$. Among the inequalities of the form \cref{eq:willmorelitterature}, the one provided by \cref{thm:boundary} is the strongest and therefore yields a more rigid characterization of the equality case. This is not obvious a priori, since submaximal volume growth typically leads to $\AVR(g)=0$ and the Willmore inequality becomes trivial. In this case the Ricci-pinched condition compensate for the superquadratic volume growth, thus suggesting a positive answer to \cref{question1}. 
\end{remark}

\begingroup
\setlength{\emergencystretch}{1em}
\printbibliography
\endgroup

\end{document}